



\documentclass[12pt]{amsart}

\pagestyle{myheadings}

\addtolength{\oddsidemargin}{-2cm}



\addtolength{\textwidth}{4cm}

\addtolength{\evensidemargin}{-2cm}




\vfuzz2pt 

\hfuzz2pt 




%

\newtheorem{theorem}{Theorem}[section]

\newtheorem{corollary}[theorem]{Corollary}

\theoremstyle{definition}

\newtheorem{remark}[theorem]{Remark}

\theoremstyle{parrafo}



\begin{document}

\title[]{A monotonicity property of variances}

\author{J. M. Aldaz}
\address{Departamento de Matem\'aticas,
Universidad  Aut\'onoma de Madrid, Cantoblanco 28049, Madrid, Spain.}
\email{jesus.munarriz@uam.es}

\thanks{The author was partially supported by Grant MTM2012-37436-C02-02  from the
MINECO of Spain}

\thanks{2010 {\em Mathematical Subject Classification.} 60A10, 26D15}


\keywords{Variance, Arithmetic-Geometric inequality}




\begin{abstract} We prove that variances of non-negative random variables have the following monotonicity property:
For all $0 < r < s \le 1$,  and all $0 \le X \in L^2$, we have 
$\operatorname{Var}(X^r)^{1/r} \le \operatorname{Var}(X^s)^{1/s}$. 
We also discuss the   real valued case.
\end{abstract}


\maketitle


\markboth{J. M. Aldaz}{AM-GM}

\section{Introduction} Here, statements such as $X \ge 0$ or $X = Y$, are always meant
in the almost sure sense.
It is immediate from either H\"older's or Jensen's inequality that
for every random variable $X \ge 0$ and all $0 < r < s < \infty$, we have
$(E X^r)^{1/r} \le (E X^s)^{1/s}$. 
In this note we obtain an analogous result for non-negative random variables
$X\in L^2$ and variances. As in the case of norms, this inequality helps to clarify the strength of 
hypotheses that might be made on $\operatorname{Var}(X^r)$. An application to a recent refinement of  the  
AM-GM inequality
$
\prod_{i=1}^n x_i^{\alpha_i}  
\le 
\sum_{i=1}^n \alpha_i x_i
$
is  presented. Lastly, this monotonicity property can be used 
when dealing with real valued random variables, by decomposing them into their positive and
negative parts, since the variance of $X$ is always comparable
to the sum of the variances of $X_+$ and $X_-$.

\section{Monotonicity of $\operatorname{Var}(X^s)^{1/s}$, and the AM-GM inequality.}
Let $0 \le X\in L^2$, so $\operatorname{Var}(X)$ is well defined. Since for all $0 < s \le 1$ we have
$\|X\|_{2s} \le \|X\|_2$, all variances $\operatorname{Var}(X^s)$ are also well defined, and thus it
is natural to ask how these quantities behave as $s$ changes. In order to be able to compare them, we
need to have the same homogeneity on both sides of the inequality, so we consider $\operatorname{Var}(X^s)^{1/s}$,
which always is homogeneous of order 2: For all $t \ge 0$,  
$\operatorname{Var}((t X)^s)^{1/s} = t^2 \operatorname{Var}(X^s)^{1/s}$.

\begin{theorem}\label{MonVar} Let $0 \le X\in L^2$ and let $0 < r < s \le 1$. Then 

\begin{equation}\label{incVar}
\operatorname{Var}(X^r)^{1/r} \le \operatorname{Var}(X^s)^{1/s}.  
\end{equation}
\end{theorem}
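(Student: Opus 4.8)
The plan is to reduce the two-parameter statement to a one-parameter inequality by a change of variable, and then prove the latter using the representation of a variance through an independent copy.

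First I would set $Y := X^r \ge 0$ and $t := s/r > 1$, so that $Y^t = X^s$; note that both $Y = X^r \in L^2$ and $Y^t = X^s \in L^2$, because $r,s \le 1$ and $X \in L^2$ force the finiteness of all moments of order at most $2$. Raising both sides of \eqref{incVar} to the power $s$ and regrouping, the claim becomes equivalent to $\operatorname{Var}(Y^t) \ge \operatorname{Var}(Y)^t$ with $t = s/r > 1$. Thus it suffices to show that for every $0 \le Y$ with $Y^t \in L^2$ and every $t \ge 1$ one has $\operatorname{Var}(Y)^t \le \operatorname{Var}(Y^t)$.

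To prove this, I would use the identity $\operatorname{Var}(Z) = \tfrac12 E\big[(Z - Z')^2\big]$, valid for any $Z \in L^2$ and an independent copy $Z'$ of $Z$. Writing $Y'$ for an independent copy of $Y$, the inequality to be proved reads
\[
2^{-t}\Big(E\big[(Y-Y')^2\big]\Big)^t \le 2^{-1} E\big[(Y^t - Y'^t)^2\big].
\]
The two ingredients are a pointwise estimate and a convexity estimate. For the pointwise part, superadditivity of $x \mapsto x^t$ on $[0,\infty)$ (namely $(u+v)^t \ge u^t + v^t$ for $u,v \ge 0$, $t \ge 1$) gives $|a^t - b^t| \ge |a-b|^t$ for all $a,b \ge 0$, whence, after squaring, $(a^t - b^t)^2 \ge \big((a-b)^2\big)^t$. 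Applying this with $a = Y$, $b = Y'$ and taking expectations yields $E[(Y^t - Y'^t)^2] \ge E\big[\big((Y-Y')^2\big)^t\big]$. For the convexity part, Jensen's inequality applied to the convex map $x \mapsto x^t$ and the nonnegative variable $W := (Y-Y')^2$ gives $E[W^t] \ge (E[W])^t$.

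Chaining these two bounds, I obtain $E[(Y^t - Y'^t)^2] \ge \big(E[(Y-Y')^2]\big)^t$, so that $\operatorname{Var}(Y^t) = \tfrac12 E[(Y^t - Y'^t)^2] \ge \tfrac12 \big(E[(Y-Y')^2]\big)^t$. Since $t \ge 1$ forces $2^{-1} \ge 2^{-t}$, this last quantity dominates $2^{-t}\big(E[(Y-Y')^2]\big)^t = \operatorname{Var}(Y)^t$, which is exactly the desired inequality. The step I expect to be the only real obstacle is getting the numerical factors to cooperate: the pointwise inequality produces a single factor $\tfrac12$ on the right while the left carries $2^{-t}$, and it is precisely the hypothesis $t \ge 1$ that makes $2^{-1} \ge 2^{-t}$ and closes the argument; everything else is routine, modulo checking the integrability needed to justify the use of the independent copy and of Jensen's inequality.
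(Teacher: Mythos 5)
Your proof is correct, and it takes a genuinely different route from the paper's. Both arguments start with the same reduction: a change of variables ($Y=X^r$, $t=s/r>1$ in your notation) turns the two-parameter inequality into the one-parameter statement $\operatorname{Var}(Y)^t\le\operatorname{Var}(Y^t)$ for $t\ge 1$. From there the paper normalizes $\|X\|_2=1$, exploits $\operatorname{Var}(X^s)^{1/s}\le\operatorname{Var}(X^s)$ (which requires that normalization), and applies the interpolation inequality $\|f\|_s\le\|f\|_r^{1-\theta}\|f\|_p^{\theta}$ twice to reduce everything to the moment inequality $(EX)^2E(X^{2s})\le(EX^s)^2$. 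You instead use the symmetrization identity $\operatorname{Var}(Z)=\tfrac12E[(Z-Z')^2]$, the pointwise superadditivity bound $|a^t-b^t|\ge|a-b|^t$ for $a,b\ge0$, and Jensen's inequality for $x\mapsto x^t$; no normalization is needed because the hypothesis $t\ge1$ absorbs the mismatch between $2^{-t}$ and $2^{-1}$. Every step checks out: the pointwise inequality follows from $(u+v)^t\ge u^t+v^t$ as you say, Jensen applies since $E[(Y-Y')^2]=2\operatorname{Var}(Y)<\infty$, and the integrability of $Y$ and $Y^t$ does follow from $X\in L^2$ on a probability space. Your argument is more elementary (no interpolation machinery, no normalization) and in fact proves the strictly stronger bound $\operatorname{Var}(Y^t)\ge 2^{t-1}\operatorname{Var}(Y)^t$, equivalently $\operatorname{Var}(X^s)^{1/s}\ge 2^{(s-r)/(rs)}\operatorname{Var}(X^r)^{1/r}$, which shows the theorem's inequality holds with a quantifiable multiplicative gap unless the variance vanishes; it also needs only $X\in L^{2s}$ rather than $X\in L^2$. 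What the paper's route buys in exchange is the explicit intermediate moment inequality and the interpolation estimate that the author singles out in a remark as independently useful in a probability context.
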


\begin{proof} Observe first that it is enough to prove the case $\operatorname{Var}(X^s)^{1/s} \le \operatorname{Var}(X)$
whenever $0 <s<1$.
The fact that $\operatorname{Var}(X^s)^{1/s}$ is increasing in $s$ then follows immediately by making the change of variables
$Y = X^s$:
$
\operatorname{Var}(X^r)^{s/r} = \operatorname{Var}(Y^{r/s})^{s/r}  \le \operatorname{Var}(Y)
 =\operatorname{Var}(X^s).
$ 

Next, we assume that $\|X\|_2 = 1$. This can be done by homogeneity, since writing $Y = X/\|X\|_2$,
we see that $\operatorname{Var}(X^s)^{1/s} \le \operatorname{Var}(X)$ is equivalent to 
$\operatorname{Var}(Y^s)^{1/s} \le \operatorname{Var}(Y)$. Under the condition $\|X\|_2 = 1$, we always have, 
for every $0 < s \le 1$
and every $t > 0$, $\|X\|_{2s}^t \le 1$, and hence, $\operatorname{Var}(X^s)^{t} \le 1$.

We shall use the following well known (and direct) interpolation consequence of H\"older's inequality
(cf., for instance, \cite[Proposition 6.10, p. 177]{Fo}) which is valid for both finite and infinite measure
spaces:
If $0 < r < s < p$, and $f\in L^r\cap L^p$, then $f$ belongs to all intermediate spaces $L^s$, and
furthermore, $\|f\|_s \le \|f\|_r^{1-t} \|f\|_p^{t}$, where $t\in (0,1)$ is defined by the equation
$1/s = (1-t)/r + t/p$. Using the indices $0 < s < 2s <2$, together with $\|X\|_2 = 1$, yields $t = 1/(2-s)$ and
\begin{equation}\label{interp1}
E (X^{2s}) \le (E X^s)^{(2 - 2s)/(2-s)},   
\end{equation}
while the indices $0 < s < 1 <2$ give  $t = (2 - 2s)/(2-s)$ and
\begin{equation}\label{interp2}
E (X) \le (E X^s)^{1/(2-s)}.   
\end{equation}

Now, by the preceding assumptions on the size of norms and variances (in particular, by
$\|X^s\|_{2}^2  = \|X\|_{2s}^{2s} \le 1 $) together with $1/s >1$, we have
$$ \operatorname{Var}(X^s)^{1/s} \le \operatorname{Var}(X^s) 
= \|X^s\|_{2}^2 \operatorname{Var}\left(\frac{X^s}{\|X^s\|_{2}}\right)
\le \operatorname{Var}\left(\frac{X^s}{\|X^s\|_{2}}\right) = 1 - \frac{(E X^s)^2}{E(X^{2s})}.
$$
Thus, it suffices to show that 
$$ 1 - \frac{(E X^s)^2}{E(X^{2s})} \le \operatorname{Var}(X) = 1 - (E X)^2,
$$
or equivalently, that
$$(E X)^2 E(X^{2s}) \le (E X^s)^2.
$$
But this follows from (\ref{interp2}) and (\ref{interp1}), since 
$$(E X)^2 E(X^{2s}) \le (E X^s)^{2/(2-s)} (E X^s)^{(2 - 2s)/(2-s)} = (E X^s)^2.
$$
\end{proof}

\begin{remark} The  interpolation result noted above is useful in a
probability context since, instead of the usual bound $\|X\|_s \le \|X\|_p$ whenever
$0 < s < p$, it yields  the stronger inequality $\|X\|_s \le \|X\|_r^{1-t} \|X\|_p^{t}$ for each
$0 < r  < s$, with $t$ defined by $1/s = (1-t)/r + t/p$.
\end{remark}

Of course, under different integrability conditions ($X\in L^p$  instead of
$X\in L^2$) the analogous inequalities hold, by using the change of variables
$Y = X^{p/2} \in L^2$.

\begin{corollary}\label{MonotVar} Let $p > 0$, let $0 \le X\in L^p$, and let $0 < r < s \le p/2$. Then 

\begin{equation}\label{increVar}
\operatorname{Var}(X^r)^{1/r} \le \operatorname{Var}(X^s)^{1/s}.  
\end{equation}
\end{corollary}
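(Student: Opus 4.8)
The plan is to reduce Corollary \ref{MonotVar} to Theorem \ref{MonVar} by a change of variables, exactly as the remark preceding the corollary suggests. The key observation is that the hypotheses are tailored so that substituting $Y = X^{p/2}$ lands us precisely in the setting already handled. Indeed, if $0 \le X \in L^p$, then $Y = X^{p/2}$ satisfies $\|Y\|_2^2 = E(X^p) < \infty$, so $0 \le Y \in L^2$ and $\operatorname{Var}(Y)$ is well defined, which is exactly the hypothesis of Theorem \ref{MonVar}.

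First I would rewrite the exponents in terms of $Y$. Given $0 < r < s \le p/2$, set $a = 2r/p$ and $b = 2s/p$. The constraint $s \le p/2$ becomes $b \le 1$, and $0 < r < s$ becomes $0 < a < b$, so the pair $(a,b)$ satisfies exactly the hypotheses $0 < a < b \le 1$ required by Theorem \ref{MonVar}. Since $X^r = Y^{2r/p} = Y^a$ and $X^s = Y^{b}$, I can apply the theorem to $Y$ with the indices $a$ and $b$ to obtain
\begin{equation*}
\operatorname{Var}(Y^a)^{1/a} \le \operatorname{Var}(Y^b)^{1/b}.
\end{equation*}

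The final step is to translate this back into the desired statement by raising both sides to a suitable power. Writing everything in terms of $X$, the left side is $\operatorname{Var}(X^r)^{p/(2r)}$ and the right side is $\operatorname{Var}(X^s)^{p/(2s)}$, so the theorem gives $\operatorname{Var}(X^r)^{p/(2r)} \le \operatorname{Var}(X^s)^{p/(2s)}$. Raising both sides to the positive power $2/p$ (which preserves the inequality since both sides are non-negative) yields exactly $\operatorname{Var}(X^r)^{1/r} \le \operatorname{Var}(X^s)^{1/s}$, as claimed in \eqref{increVar}.

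This argument is essentially bookkeeping, so I do not expect a genuine obstacle; the only point requiring a moment's care is verifying that the substitution respects the endpoint constraint, i.e.\ that $s \le p/2$ is precisely what guarantees $b = 2s/p \le 1$ so that Theorem \ref{MonVar} applies at its boundary. One should also note that $Y^a = X^r$ and $Y^b = X^s$ are themselves in $L^2$ (equivalently, that the relevant variances are finite), which again follows from $0 \le Y \in L^2$ together with the already-established fact that $\|Y^c\|_2 \le \|Y\|_2$ for $0 < c \le 1$; this is what makes all the quantities appearing in \eqref{increVar} well defined.
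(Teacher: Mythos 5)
Your proposal is correct and is exactly the argument the paper intends: the corollary is justified there by the one-line remark that the change of variables $Y = X^{p/2} \in L^2$ reduces it to Theorem \ref{MonVar}, and your substitution $a = 2r/p$, $b = 2s/p$ with the final rescaling by the power $2/p$ is just that reduction written out in full. No issues.
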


Next we apply the preceding result to a recent refinement of the inequality between arithmetic and geometric 
means (the AM-GM inequality) proven in \cite{A1} (the reader interested in some probabilistic aspects of
the AM-GM inequality, may want to consult \cite{A3} and the references contained therein; for non-variance bounds,
see \cite{A4} and its references). 
Let us recall the notation used in \cite{A1}:
$X$  denotes the vector with non-negative
entries $(x_1,\dots,x_n)$, and $X^{1/2} = (x_1^{1/2},\dots,x_n^{1/2})$.
Given a sequence of weights $\alpha = (\alpha_1,\dots,\alpha_n)$ with $\alpha_i > 0$ and $\sum_{i=1}^n \alpha_i = 1$, 
and a vector $Y = (y_1,\dots,y_n)$, its arithmetic mean is denoted by $E_\alpha(Y) := 
\sum_{i=1}^n \alpha_i y_i$,  its geometric mean, by $\Pi_\alpha(Y) := \prod_{i=1}^n y_i^{\alpha_i} $,
and its   variance, 
by 
$$
\operatorname{Var}_\alpha (Y) = \sum_{i=1}^n \alpha_i \left(y_i - \sum_{k=1}^n \alpha_k y_k \right)^2
= \sum_{i=1}^n \alpha_i y_i^2 - \left(\sum_{k=1}^n \alpha_k y_k \right)^2.
$$ 
 Finally, $Y_{\max}$ and
$Y_{\min}$ respectively stand for the maximum and the minimum values of $Y$.

Conceptually, variance bounds for $E_\alpha X - \Pi_\alpha X$
represent the natural extension of the equality case in the AM-GM
inequality (zero variance is equivalent to equality). From a more applied viewpoint,
the variance is used in the
Economics literature to estimate 
the difference between these means  (cf., for instance, \cite[Chapter 1, Appendix 2]{Si}; both the 
arithmetic and geometric means are used
when reporting on the performance of
a portfolio).

The bounds for the difference in the AM-GM appearing in  \cite{A1} involve $\operatorname{Var}(X^{1/2})$, rather than
$\sigma(X) = \operatorname{Var_\alpha}(X)^{1/2}$. Using Theorem \ref{MonVar} or Corollary \ref{MonotVar}, the following upper
bound follows: $E_\alpha X - \Pi_\alpha X
\le
\frac{1}{\alpha_{\min}} \sigma(X)$. More generally, 
by putting together \cite[Theorem 4.2]{A1} with Corollary \ref{MonotVar}, we obtain the next result.

\begin{theorem}\label{AMGMVar1}  For $n\ge 2$ and $i=1,\dots, n$, let $X = (x_1,\dots,x_n)$ be such that $x_i\ge 0$, and
let $\alpha = (\alpha_1,\dots,\alpha_n)$ satisfy
$\alpha_i > 0$ and $\sum_{i=1}^n \alpha_i = 1$.  Then for all $r\in (0,1]$ and all $s\in [1,\infty)$ we have
\begin{equation}\label{AMGMVar1eq}
\frac{1}{1-\alpha_{\min}}\operatorname{Var}_\alpha(X^{r/2})^{1/r} \le E_\alpha X - \Pi_\alpha X
\le
\frac{1}{\alpha_{\min}} \operatorname{Var_\alpha}(X^{s/2})^{1/s}.
\end{equation}
\end{theorem}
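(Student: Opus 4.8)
The plan is to treat this as the transfer of a single ``base'' estimate at the exponent $1/2$ across the whole range of $r$ and $s$ via the monotonicity of variances. The starting point is \cite[Theorem 4.2]{A1}, which I read as furnishing the two-sided bound
$$\frac{1}{1-\alpha_{\min}}\operatorname{Var}_\alpha(X^{1/2}) \le E_\alpha X - \Pi_\alpha X \le \frac{1}{\alpha_{\min}} \operatorname{Var}_\alpha(X^{1/2}),$$
that is, exactly the case $r=s=1$ of (\ref{AMGMVar1eq}). Everything else should follow once $\operatorname{Var}_\alpha$ is reinterpreted as an honest variance: I would regard $\{1,\dots,n\}$ as a probability space in which the atom $i$ carries mass $\alpha_i$, so that $E_\alpha$ and $\operatorname{Var}_\alpha$ are the expectation and variance for this measure, and then set $Z = X^{1/2} = (x_1^{1/2},\dots,x_n^{1/2})$, so that $X^{r/2} = Z^r$ and $X^{s/2} = Z^s$. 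Since $Z$ takes only finitely many non-negative values it lies in $L^p$ for every $p>0$, so Corollary \ref{MonotVar} (equivalently Theorem \ref{MonVar}) applies to $Z$ for any admissible choice of exponents.

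For the upper bound I would push the exponent \emph{upward}. Fixing $s\in(1,\infty)$ and applying Corollary \ref{MonotVar} to $Z$ with $p = 2s$ and the indices $1 < s \le p/2$ gives $\operatorname{Var}_\alpha(Z) = \operatorname{Var}_\alpha(Z)^{1/1} \le \operatorname{Var}_\alpha(Z^s)^{1/s}$, that is, $\operatorname{Var}_\alpha(X^{1/2}) \le \operatorname{Var}_\alpha(X^{s/2})^{1/s}$, with equality at the boundary $s=1$. Chaining this with the right-hand inequality of the base case yields
$$E_\alpha X - \Pi_\alpha X \le \frac{1}{\alpha_{\min}} \operatorname{Var}_\alpha(X^{1/2}) \le \frac{1}{\alpha_{\min}} \operatorname{Var}_\alpha(X^{s/2})^{1/s}.$$

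For the lower bound I would push the exponent \emph{downward} instead. Fixing $r\in(0,1)$ and applying Theorem \ref{MonVar} to $Z$ with the indices $r < 1$ gives $\operatorname{Var}_\alpha(Z^r)^{1/r} \le \operatorname{Var}_\alpha(Z)$, i.e. $\operatorname{Var}_\alpha(X^{r/2})^{1/r} \le \operatorname{Var}_\alpha(X^{1/2})$, again with equality at $r=1$. Combining with the left-hand inequality of the base case gives
$$\frac{1}{1-\alpha_{\min}}\operatorname{Var}_\alpha(X^{r/2})^{1/r} \le \frac{1}{1-\alpha_{\min}}\operatorname{Var}_\alpha(X^{1/2}) \le E_\alpha X - \Pi_\alpha X,$$
which completes the argument.

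The only genuinely delicate point is the bookkeeping about directions: monotonicity must be invoked upward ($s\ge 1$) on the majorizing side and downward ($r\le 1$) on the minorizing side, so that in each case the monotonicity inequality points the \emph{same} way as the base estimate rather than against it; the degenerate indices $r=1$ and $s=1$ then need only be recorded as equalities. Strictly speaking I am also relying on \cite[Theorem 4.2]{A1} having precisely the $r=s=1$ shape displayed above, so confirming that the cited result is stated in exactly this form is the one external ingredient I would want to verify before finalizing the proof.
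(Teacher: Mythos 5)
Your proof is correct and is essentially the argument the paper intends: the paper simply asserts that the theorem follows ``by putting together \cite[Theorem 4.2]{A1} with Corollary \ref{MonotVar},'' where the cited theorem is precisely the $r=s=1$ case of (\ref{AMGMVar1eq}), and your transfer of that base estimate via monotonicity (downward in $r$ on the minorizing side, upward in $s$ on the majorizing side) is exactly this combination. Your explicit bookkeeping of the directions of the monotonicity inequality fills in the details the paper leaves implicit.
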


These bounds are optimal (cf. \cite[Examples 2.1 and 2.3]{A1}).  Theorem 4.2 from 
 \cite{A1}, and its proof, were suggested by 
 \cite[Theorem]{CaFi}, which states that if $0 < X_{\min}$,
then
\begin{equation}\label{cafieq}
\frac{1}{2 X_{\max} }  \operatorname{Var}_\alpha(X)
\le
E_\alpha X - \Pi {}_\alpha X
\le
\frac{1}{2 X_{\min}}  \operatorname{Var}_\alpha(X).
\end{equation}
A drawback of  (\ref{cafieq}) is that the bounds  depend explicitly on $X_{\max}$ and $X_{\min}$, something
that makes it unsuitable for some standard applications, such as, for instance, 
refining H\"older's inequality  (see  \cite{A1} for more details). Of course, since the  variance
 is homogeneous of degree 2, dividing by $ X_{\max}$ and $ X_{\min}$  in (\ref{cafieq}),
gives the left and right hand sides
the same homogeneity as the middle term.  We also point out that the  inequality 
$\operatorname{Var}_\alpha(X^{1/2}) \le E_\alpha X - \Pi_\alpha X$, appeared
in \cite[Theorem 1]{A2}; this inequality is trivial, useful, and as $n\to\infty$,
asymptotically
optimal, since $(1-\alpha_{\min})^{-1}\to 1$.

\section{Real valued random variables.}

The monotonicity result applies to $X\ge 0$ only: If  $X < 0$ with positive probability,
then $X^s$
may fail to be defined as a real valued function, for certain values of $s  > 0$. While trivially
$\operatorname{Var}(X)\ge \operatorname{Var}(|X|)$, 
in general these two quantities are not comparable, so it is not possible to simply replace $X$ with $|X|$. 
However,  monotonicity 
can  be used on $\operatorname{Var}(X_+)$ and  $ \operatorname{Var}(X_-)$, where $X_+ := \max\{X, 0\}$ and 
$X_-:=  - \min\{X, 0\}$   denote the positive and negative parts of $X$, respectively. Thus, indirectly it also
applies to 
$\operatorname{Var}(X)$, since the latter is indeed comparable
to $\operatorname{Var}(X_+) + \operatorname{Var}(X_-)$. 
We have not found this result in the literature, so we include it here for completeness.
Essentially, the next theorem says that 
\begin{equation*}
\operatorname{Var}(X_+) + \operatorname{Var}(X_-)
 \le \operatorname{Var}(X)
\le   2\left(\operatorname{Var}(X_+) + \operatorname{Var}(X_-)\right),
\end{equation*}
and the extremal cases occur,  for the left hand side inequality, when either $X\ge 0$ or $X\le 0$, and
for the right hand side inequality,
when  $X = c(\mathbf{1}_{D} - \mathbf{1}_{D^c})$, where $c\in\mathbb{R}$ and $D$ is a measurable set.

\begin{theorem}\label{posnegteor} Let  $X\in L^2$ be real valued,  and denote by  $\mathcal{B}$ the
sub-$\sigma$-algebra 
$$\mathcal{B}:= \{\emptyset, \Omega, \{X > 0\},  \{X= 0\}, \{X < 0\}\}.
$$
Then
\begin{equation}\label{posneg}
\operatorname{Var}(X_+) + \operatorname{Var}(X_-) \le \operatorname{Var}(X)
\end{equation}
\begin{equation}\label{posneg2}
\le  \operatorname{Var}(X_+) + \operatorname{Var}(X_-) + \operatorname{Var}(E(X_+|\mathcal{B})) 
+ \operatorname{Var}(E(X_-|\mathcal{B}))
\le 2\left(\operatorname{Var}(X_+) + \operatorname{Var}(X_-)\right).
\end{equation}
Furthermore, equality holds in the first inequality if and only if either $X\ge 0 $ or $X\le 0 $;
in the second, if and only if either  $X > 0 $,  or $X < 0 $, or $0 < P(\{X > 0\})$,  $0 <  P(\{X < 0\})$,
$0 = P(\{X > 0\})$, and 
$E(X_+|\{X > 0\}) = E(X_-|\{X < 0\})$;
and in the third, if and only if  $X = E(X|\mathcal{B})$.
\end{theorem}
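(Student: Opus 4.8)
The plan is to reduce everything to the pointwise identity $X_+ X_- = 0$, valid because at each point at most one of $X_+$, $X_-$ is nonzero. Expanding $X^2 = (X_+ - X_-)^2 = X_+^2 + X_-^2$ and $(EX)^2 = (EX_+ - EX_-)^2$ and subtracting, I would first record the key identity
\[
\operatorname{Var}(X) = \operatorname{Var}(X_+) + \operatorname{Var}(X_-) + 2\,E(X_+)\,E(X_-).
\]
Since $E(X_+) \ge 0$ and $E(X_-) \ge 0$, inequality (\ref{posneg}) is then immediate, with equality exactly when $E(X_+)E(X_-) = 0$; as $X_\pm \ge 0$, this means $X_+ = 0$ or $X_- = 0$ almost surely, i.e. $X \le 0$ or $X \ge 0$, which is the stated characterization.

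For (\ref{posneg2}), the identity shows it is enough to bound the cross term $2E(X_+)E(X_-)$ by the two conditional variances. I would compute the conditional expectations explicitly on the atoms of $\mathcal{B}$. Writing $A = \{X > 0\}$, $B = \{X = 0\}$, $C = \{X < 0\}$ with $p = P(A)$, $q = P(B)$, $r = P(C)$, $p + q + r = 1$, and using $X_+ = X\mathbf{1}_A$, $X_- = -X\mathbf{1}_C$, one gets $E(X_+|\mathcal{B}) = \frac{E(X_+)}{p}\mathbf{1}_A$ and $E(X_-|\mathcal{B}) = \frac{E(X_-)}{r}\mathbf{1}_C$ (when a denominator vanishes the corresponding part is a.s.\ zero and the term drops out). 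With $a := E(X_+)$, $b := E(X_-)$ this yields $\operatorname{Var}(E(X_+|\mathcal{B})) = a^2(1-p)/p$ and $\operatorname{Var}(E(X_-|\mathcal{B})) = b^2(1-r)/r$, so (\ref{posneg2}) reduces to
\[
2ab \le a^2\,\frac{1-p}{p} + b^2\,\frac{1-r}{r}.
\]
By AM-GM the right-hand side is at least $2ab\sqrt{(1-p)(1-r)/(pr)}$, and $(1-p)(1-r) - pr = 1 - p - r = q \ge 0$ forces the square root to be at least $1$, giving the bound. Equality needs both $q = 0$ and equality in AM-GM; with $q = 0$ (so $1-p = r$, $1-r = p$) the AM-GM condition rearranges to $ar = bp$, i.e. $E(X_+|A) = E(X_-|C)$, which, together with the one-sided degenerate cases, is the stated characterization.

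The third inequality is the simplest: after cancelling $\operatorname{Var}(X_+) + \operatorname{Var}(X_-)$ it is equivalent to
\[
\operatorname{Var}(E(X_+|\mathcal{B})) + \operatorname{Var}(E(X_-|\mathcal{B})) \le \operatorname{Var}(X_+) + \operatorname{Var}(X_-),
\]
which follows termwise from the law of total variance $\operatorname{Var}(Y) = E(\operatorname{Var}(Y|\mathcal{B})) + \operatorname{Var}(E(Y|\mathcal{B}))$ applied to $Y = X_+$ and $Y = X_-$, using $E(\operatorname{Var}(Y|\mathcal{B})) \ge 0$. Equality holds iff $E(\operatorname{Var}(X_+|\mathcal{B})) = E(\operatorname{Var}(X_-|\mathcal{B})) = 0$, i.e. iff $X_+$ and $X_-$ are $\mathcal{B}$-measurable; since $X_+$ and $X_-$ are $\mathcal{B}$-measurable exactly when $X$ is constant on $\{X>0\}$ and on $\{X<0\}$, this amounts to $X = E(X|\mathcal{B})$.

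I expect the main difficulty to lie in the equality analysis of (\ref{posneg2}): one must carefully separate the nondegenerate regime $p, r > 0$, $q = 0$ (which produces the condition $E(X_+|A) = E(X_-|C)$) from the boundary cases $p = 0$ or $r = 0$, while respecting the almost-sure conventions so that the conditional expectations are well defined throughout. The three inequalities themselves are otherwise routine once the key identity, the elementary two-variable AM-GM bound, and the law of total variance are in place.
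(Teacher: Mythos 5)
Your proposal is correct and follows essentially the same route as the paper: the key identity $\operatorname{Var}(X)=\operatorname{Var}(X_+)+\operatorname{Var}(X_-)+2\,EX_+\,EX_-$ for the first inequality and its equality case, an explicit computation of $\operatorname{Var}(E(X_\pm|\mathcal{B}))$ on the atoms of $\mathcal{B}$ reducing the middle inequality to an elementary two-variable inequality, and the law of total variance for the last one. The only (cosmetic) difference is in how that elementary inequality is closed --- you use AM--GM together with $(1-p)(1-r)\ge pr$, while the paper writes it as $(Aa+Bb)^2\le Aa^2+Bb^2$ and invokes convexity of $x^2$ plus $A+B\le 1$; these are the same inequality in different variables, and your equality analysis (including the flagged degenerate cases) matches the paper's.
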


\begin{proof} The first inequality follows directly from the definitions, the second, from the convexity of
$\phi ( x) = x^2$, and  the third, from the law of total variance. More
precisely,
$$
\operatorname{Var}(X_+) + \operatorname{Var}(X_-) 
\le \operatorname{Var}(X_+) + \operatorname{Var}(X_-) + 2 EX_+EX_-
$$
$$
= E(X_+^2) - (EX_+)^2 + E(X_-^2) - (EX_-)^2 + 2 EX_+EX_- =  E(X^2) - (EX_+ - EX_-)^2
=\operatorname{Var}(X),
$$ 
and we have equality if and only if $EX_+EX_ - = 0$, which happens if and only if either
$X\ge 0$ or $X\le 0$.

Since, as we just saw, $\operatorname{Var}(X) = \operatorname{Var}(X_+) + \operatorname{Var}(X_-) + 2 EX_+EX_-$,
to prove the middle inequality in (\ref{posneg})-(\ref{posneg2}),
it is enough to show that 
\begin{equation}\label{posneg3}
2 EX_+EX_- \le  \operatorname{Var}(E(X_+|\mathcal{B})) 
+ \operatorname{Var}(E(X_-|\mathcal{B})).
\end{equation}

Observe that if either
$X\ge 0$ or $X\le 0$, then 
$$
2 EX_+EX_-
 = 0,
$$ 
and if additionally  either
$X> 0$ or $X < 0$, then
$$
 0 = \operatorname{Var}(E(X_+|\mathcal{B})) 
+ \operatorname{Var}(E(X_-|\mathcal{B})).
$$ 
Next,  assume that both
$A:= P\{X > 0\} >0$ and   $B:= P\{X < 0\} > 0$, and write $C:= P\{X =0\}$, so $0 < A + B = 1 - C \le 1$.
Then $E(X|\mathcal{B})$ takes exactly two values different from 0,
say $E(X|\mathcal{B}) = a > 0$ on $\{X > 0\}$, 
and $E(X|\mathcal{B}) = -b < 0$ on $\{X < 0\}$.
With this notation, in order to obtain the middle inequality it suffices to show that 
\begin{equation*} 2 EX_+EX_- 
= 2 Aa Bb \le 
 \operatorname{Var}\left( E(X_+|\mathcal{B})\right)  +  \operatorname{Var}\left( E(X_-|\mathcal{B})\right)
= A a^2 - (Aa)^2 + B b^2 -(Bb)^2 ,
\end{equation*}
or equivalently, that
$$
(Aa + Bb)^2 \le  A a^2  + B b^2.
$$
But this is follows from the convexity of $\phi (x) = x^2$, since 
$$
(Aa + Bb)^2 
= (A + B)^2 \left(\frac{A}{A+B}a + \frac{B}{A+B}b\right)^2 
\le  (A + B)^2 \left(\frac{A}{A+B}a^2 + \frac{B}{A+B}b^2\right) 
$$
$$
= (A + B)\left(Aa^2 + Bb^2\right) 
\le  
A a^2  + B b^2.
$$
Furthermore, 
$
(Aa + Bb)^2 
= 
A a^2  + B b^2
$ 
if and only if both  $a = b$ (by  the strict convexity of $\phi$) and $A + B=1$.

Finally,  the law of total variance 
$\operatorname{Var}(X) = \operatorname{Var}\left( E(X|\mathcal{B})\right) 
+ E\left( \operatorname{Var}(X|\mathcal{B})\right)$, applied to both $X_+$ and $X_-$, tells
us that
$
\operatorname{Var}\left( X_+\right)  \ge \operatorname{Var}\left( E(X_+|\mathcal{B})\right)$
and
$
\operatorname{Var}\left( X_-\right)  \ge \operatorname{Var}\left( E(X_-|\mathcal{B})\right)$, 
with equality if and only if 
$E\left( \operatorname{Var}(X_+|\mathcal{B})\right) = 0 =E\left( \operatorname{Var}(X_-|\mathcal{B})\right)$,
which happens if and only if both $X_+$ and $X_-$ are constant on $\{X > 0\}$ and on $\{X < 0\}$
respectively. This yields the last inequality, together with the  equality condition $X = E(X|\mathcal{B})$.
\end{proof}

\begin{remark} Instead of 
$\mathcal{B}= \{\emptyset, \Omega, \{X > 0\},  \{X= 0\}, \{X < 0\}\},
$ either of the simpler algebras $\mathcal{B}_1= \{\emptyset, \Omega, \{X \ge 0\},   \{X < 0\}\}
$ or $\mathcal{B}_2= \{\emptyset, \Omega, \{X > 0\},  \{X \le 0\}\}
$ could have been used in the preceding theorem, and the inequalities stated there would still hold.
But the equality conditions would be less symmetric. For instance, if $X\ge 0$, then 
$\mathcal{B}_1$ is trivial up to sets of measure zero (that is, as a measure algebra), so 
$E(X_+|\mathcal{B}_1) = EX_+ = EX$, and $\operatorname{Var}\left( E(X_+|\mathcal{B}_1)\right) = 0$.
Thus, the middle inequality in (\ref{posneg})-(\ref{posneg2}), is actually an equality in this case.
However, if $X =  - \mathbf{1}_{D}\le 0$, where $0 < P(D) < 1$, then $X = X_- = E(X_-|\mathcal{B}_1)$,
and $ \operatorname{Var}(X)
< 
\operatorname{Var}(X_-) +\operatorname{Var}(E(X_-|\mathcal{B}_1)) 
= 2 \operatorname{Var}(X).
$
\end{remark}

\begin{corollary} Let $p \ge 2$, let $X\in L^p$ be real valued, and let $0 < r \le 2 \le s \le p$. Then 

\begin{equation*}
\operatorname{Var}(X^{r/2}_+ )^{2/r}  + \operatorname{Var}(X^{r/2}_- )^{2/r}
 \le \operatorname{Var}(X)
 \le 2 \left(\operatorname{Var}(X_+^{s/2})^{2/s} + \operatorname{Var}(X_-^{s/2})^{2/s}\right).  
\end{equation*}
\end{corollary}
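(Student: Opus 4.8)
The plan is to combine the two-sided comparison of Theorem \ref{posnegteor} with the monotonicity of Corollary \ref{MonotVar}, applied separately to the non-negative random variables $X_+$ and $X_-$. First I would note that both $X_+$ and $X_-$ lie in $L^p$, hence in $L^s$ for every $s \le p$, since on a probability space $L^p \subseteq L^s$ whenever $s \le p$; thus every variance appearing in the statement is well defined, including $\operatorname{Var}(X_\pm^{s/2})$, which requires $X_\pm \in L^s$.

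For the left-hand inequality I would start from $\operatorname{Var}(X_+) + \operatorname{Var}(X_-) \le \operatorname{Var}(X)$, which is (\ref{posneg}). It then suffices to dominate each summand on the left of the asserted inequality by the corresponding variance of $X_\pm$. Writing $2/r = 1/(r/2)$ and observing that $0 < r/2 \le 1 \le p/2$ under the hypotheses $r \le 2$ and $p \ge 2$, Corollary \ref{MonotVar} applied to $X_+$ with exponents $r/2$ and $1$ gives $\operatorname{Var}(X_+^{r/2})^{2/r} \le \operatorname{Var}(X_+)$, and likewise $\operatorname{Var}(X_-^{r/2})^{2/r} \le \operatorname{Var}(X_-)$. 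Adding these and chaining with (\ref{posneg}) yields the left-hand bound.

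For the right-hand inequality I would start from the outer bound $\operatorname{Var}(X) \le 2\bigl(\operatorname{Var}(X_+) + \operatorname{Var}(X_-)\bigr)$, obtained by chaining (\ref{posneg}) through (\ref{posneg2}). Here the monotonicity is used in the opposite direction: since $1 \le s/2 \le p/2$ under $2 \le s \le p$, Corollary \ref{MonotVar} applied to $X_+$ with exponents $1$ and $s/2$ gives $\operatorname{Var}(X_+) \le \operatorname{Var}(X_+^{s/2})^{2/s}$, and similarly for $X_-$. Substituting these into the outer bound produces the right-hand side.

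The argument is essentially a bookkeeping exercise, so I do not expect a genuine obstacle. The points requiring care are the identity $2/r = 1/(r/2)$ (and $2/s = 1/(s/2)$), so that the exponents match the normalization $\operatorname{Var}(\,\cdot\,)^{1/(\,\cdot\,)}$ used in Corollary \ref{MonotVar}; the verification that the upper constraints $1 \le p/2$ and $s/2 \le p/2$ hold; and the boundary exponents $r = 2$ and $s = 2$, where the relevant instances of Corollary \ref{MonotVar} degenerate to equalities and the claim reduces directly to Theorem \ref{posnegteor}.
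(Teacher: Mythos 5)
Your argument is correct and is exactly the proof the paper intends (the corollary is stated without proof, but Section 3 makes clear it is obtained by applying the monotonicity of Corollary \ref{MonotVar} to $X_+$ and $X_-$ separately and chaining with the two-sided comparison of Theorem \ref{posnegteor}). The exponent bookkeeping, the integrability check $X_\pm\in L^p\subseteq L^s$, and the treatment of the boundary cases $r=2$, $s=2$ are all handled properly.
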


\end{document}